\newcommand{\sect}[1]{\section{#1}\setcounter{equation}{0}}
\font\mbn=msbm10 scaled \magstep1
\font\mbs=msbm7 scaled \magstep1
\font\mbss=msbm5 scaled \magstep1
\newcommand{\N}       { \mathbb{N}}
\newtheorem{Th}{Theorem}[section]
\newtheorem{Prop}[Th]{Proposition}
\newtheorem*{Theorem}{Theorem}
\begin{document}

\title[A Note on the Lipschitz Selection]{A Note on the Lipschitz Selection}
\author{Alexander Brudnyi}
\address{Department of Mathematics and Statistics\newline
\hspace*{1em} University of Calgary\newline
\hspace*{1em} Calgary, Alberta, Canada\newline
\hspace*{1em} T2N 1N4}
\email{abrudnyi@ucalgary.ca}

\keywords{Lipschitz selection, length space, regular covering, metric tree}
\subjclass[2010]{46E35}

\thanks{Research supported in part by NSERC}

\begin{abstract}
We present an alternate proof of the passage from the finiteness principle for metric trees to the construction of the core in the C. Fefferman and Shvartsman finiteness theorem for Lipschitz selection problems.\\
\smallskip
%\textsc{R\'esum\'e}. On pr\'esente une preuve alternative du passage du principe de la finitude pour des arbres m\'etriques jusqu'\`a la construction du noyau dans le th\'eor\`eme de finitude de C. Fefferman et Shvartsman pour la s\'election des probl\`emes Lipschitz. 
 \end{abstract}

\date{}

\maketitle

						% Activate to display a given date or no date

\section{Introduction}
Let $(Y, \|\cdot\|)$ be a Banach space,  $\mathcal K_m(Y)$ be the family of all nonempty compact convex subsets $K \subset Y$ of (covering) dimension at most $m$. Let $(\mathcal M,\rho)$ be a pseudometric space (i.e., $\rho:\mathcal M\times\mathcal M\rightarrow [0,\infty]$ satisfies $\rho(x,x)=0$, $\rho(x,y)=\rho(y,x)$ and $\rho(x,y)\le\rho(x,z)+\rho(z,y)$ for all $x,y,z\in\mathcal M$) and $F:\mathcal M\rightarrow \mathcal K_m(Y)$. For a subset $\mathcal M'\subset \mathcal M$ a Lipschitz map $f:\mathcal M'\rightarrow Y$ such that $f(x)\in F(x)$ for all $x\in\mathcal M'$ is called a Lipschitz selection of $F|_{\mathcal M'}$. Recall that $f:\mathcal M'\rightarrow Y$ is Lipschitz if there is $C\in\mathbb R_+$ such that 
\[
\|f(x)-f(y)\|\le C\rho(x,y)\quad {\rm for\ all}\quad x,y\in\mathcal M'.
\]
The least $C$ here is called the Lipschitz seminorm of $f$ and is denoted by $\|f\|_{{\rm Lip}(\mathcal M',Y)}$.

The following statement is the main result of C. Fefferman and Shvartsman paper \cite{FS}. In its formulation, $N(m,Y)$ is equal to $2^{\min(m+1,{\rm dim}\, Y)}$ if ${\rm dim}\, Y<\infty$ and $2^{m+1}$ otherwise.
\begin{Theorem}[\cite{FS}, Theorem 1.2]
Fix $m\ge 1$. Let $(\mathcal M,\rho)$ be a pseudometric space, and let $F:\mathcal M\rightarrow \mathcal K_m(Y)$ for a Banach space $Y$. Let $\lambda$ be a positive real number.

\noindent Suppose that for every 
$\mathcal M'\subset \mathcal M$ consisting of at most $N(m,Y)$ points,  the restriction $F|_{\mathcal M'}$ of $F$ to $\mathcal M'$ has a Lipschitz selection $f_{\mathcal M'}$ with Lipschitz seminorm $\|f_{\mathcal M'}\|_{{\rm Lip}(\mathcal M',Y)}\le\lambda$. Then $F$ has a Lipschitz selection $f$ with Lipschitz seminorm $\|f\|_{{\rm Lip}(\mathcal M,Y)}\le\gamma\lambda$.

\noindent Here, $\gamma$ depends only on $m$.
\end{Theorem}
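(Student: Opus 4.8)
The plan is to derive the theorem from the \emph{finiteness principle for metric trees} — the special case in which $\mathcal M$ is a metric tree, which we take as already established and treat as the sole external input — the bridge being the construction of the \emph{core} of $F$ over a regular covering. First, rescale so that $\lambda=1$. A weak-$*$/ultrafilter compactness argument (each $F(x)$ is compact, and candidate selections are uniformly $\gamma$-Lipschitz with values in these compacta) reduces the statement to a \emph{finite} pseudometric space $\mathcal M$, provided $\gamma$ stays independent of $\#\mathcal M$. Replacing $F$ by its intersection over each $\rho$-null class — nonempty by the $2$-point hypothesis and the finite-intersection property of compacta, and still satisfying the $N(m,Y)$-point property — we pass to the metric quotient and assume $(\mathcal M,\rho)$ is a finite metric space.

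\textbf{A covering tree.} Realize $\mathcal M$ as the vertex set of the finite metric graph $\mathcal G$ built from the complete graph on $\mathcal M$ with the edge $\{x,y\}$ a segment of length $\rho(x,y)$; then $\mathcal G$ is a length space whose intrinsic metric restricts to $\rho$ on $\mathcal M$, so $\mathcal M$ sits in a length space. Let $p\colon\mathcal T\to\mathcal G$ be the universal covering — a regular covering with deck group $\Gamma=\pi_1(\mathcal G)$ acting on $\mathcal T$ by isometries. Since $\mathcal G$ is a graph, $\mathcal T$ is a metric tree and $p$ is a local isometry; on the lifted set $\widehat{\mathcal M}:=p^{-1}(\mathcal M)$ the pulled-back map $\widehat F:=F\circ p$ is $\Gamma$-invariant and, by locality of the defining condition, still has the $N(m,Y)$-point property with constant comparable to $1$, and every finite subset of $\widehat{\mathcal M}$ is (with the metric from $\mathcal T$) the vertex set of a finite metric tree, so the metric-tree finiteness principle applies to $\widehat F$ restricted to it.

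\textbf{The core.} For a $\mathcal K_m(Y)$-valued map $G$ on $\widehat{\mathcal M}$ define its refinement $RG$ by
\[
(RG)(\tilde x)\ :=\ \bigcap_{T}\ \overline{\mathrm{conv}}\bigl\{\,g(\tilde x)\ :\ g\ \text{a }\gamma_0(m)\text{-Lipschitz selection of }G|_{T}\,\bigr\},
\]
the intersection over finite subtrees $T\ni\tilde x$ of $\mathcal T$, where $\gamma_0(m)$ is the constant of the metric-tree finiteness principle. Applied to $\widehat F$ — and inductively to its refinements, which one checks retain enough of the finiteness property — that principle makes every set in the intersection nonempty, while passing to a common enlargement of two subtrees yields the finite-intersection property; hence, by compactness, $RG$ is again $\mathcal K_m(Y)$-valued, satisfies $RG\le G$, and is $\Gamma$-invariant whenever $G$ is (the construction uses only $G$ and the $\Gamma$-invariant metric $d_{\mathcal T}$). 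A Helly argument in dimension $m$ shows the decreasing chain $\widehat F\ge R\widehat F\ge R^{2}\widehat F\ge\cdots$ stabilizes after a number $K(m)$ of steps depending on $m$ alone; the \emph{core} $F^{\#}:=R^{K(m)}\widehat F$ is then $\Gamma$-invariant and $\mathcal K_m(Y)$-valued, and from it one extracts a canonical Lipschitz selection $\tilde f$ of $\widehat F$ with $\|\tilde f\|_{\mathrm{Lip}}\le c(m)$. Being $\Gamma$-invariant, $\tilde f$ is constant on the fibres of $p$, hence descends to a selection $f$ of $F$ on $\mathcal M$; lifting a geodesic from $x$ to $y$ in $\mathcal G$ produces lifts $\tilde x,\tilde y$ with $d_{\mathcal T}(\tilde x,\tilde y)\le\rho(x,y)$, so $\|f\|_{\mathrm{Lip}(\mathcal M,Y)}\le c(m)=:\gamma(m)$, as required.

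\textbf{Main obstacle.} The heart of the matter — and the point at which the passage from metric trees to general spaces is actually effected — is the analysis of the core: showing that each refinement $R^{j}\widehat F$ stays nonempty-valued (this is precisely where the metric-tree finiteness principle enters) and retains enough structure for the next refinement, bounding $K(m)$, and hence all constants, by a function of $m$ alone through a delicate dimension/Helly argument so that $\#\mathcal M$ never intrudes, and producing from the stabilized core a Lipschitz selection with the correct seminorm. Keeping every step canonical — so that $\Gamma$-invariance, and with it the descent to $\mathcal M$, is automatic — is what circumvents the non-amenability of $\Gamma$, which would otherwise obstruct repairing a non-canonical selection by averaging over $\Gamma$-orbits.
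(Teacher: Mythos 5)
Your overall architecture --- embed $\mathcal M$ as the vertex set of the complete metric graph $\mathcal G$, pass to the universal covering $p:\mathcal T\to\mathcal G$, which is a metric tree with deck group $\Gamma=\pi_1(\mathcal G)$ acting by isometries, pull back $F$, and invoke the finiteness principle for metric trees --- is exactly the paper's. The gaps are in how you come back down from $\mathcal T$ to $\mathcal M$. You propose to build a $\Gamma$-invariant core on $\mathcal T$ by iterating a refinement operator $R$, to prove stabilization after $K(m)$ steps by a ``Helly argument,'' and then to ``extract a canonical Lipschitz selection'' that is $\Gamma$-invariant and hence descends. None of these steps is carried out, and each is substantial: (1) that each $R^{j}\widehat F$ remains nonempty-valued, retains a usable finiteness property, and that the chain stabilizes in a number of steps depending only on $m$ is essentially the hard technical heart of Fefferman--Shvartsman's \emph{original} proof --- you are re-sketching the difficult argument this note is designed to replace, not supplying an alternative to it; (2) producing a point-valued Lipschitz selection from a set-valued map that is merely Lipschitz in Hausdorff distance is not automatic --- it is Shvartsman's selection theorem (\cite[Th.\,1.6]{FS} and \cite{S}), a nontrivial external input you never name --- and there is no reason such a selection can be chosen $\Gamma$-equivariantly just because the set-valued map is invariant; $\Gamma$ is a nonabelian free group, and ``canonical'' is not an argument for equivariance.

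The paper's observation is that all of this can be bypassed. Take an \emph{arbitrary} (in no way equivariant) Lipschitz selection $f$ of $p^{*}F$ on the whole tree --- this already requires extending \cite[Cor.\,4.16]{FS} from finite to infinite metric trees by a Tikhonov compactness argument, a step you also need but do not supply, since the universal cover of the complete graph on three or more vertices is an infinite tree --- and define the core downstairs directly by $G(x):=\overline{{\rm conv}}\,f(p^{-1}(x))\subset F(x)$. The identity $d(x,y)=\inf_{\gamma\in\Gamma}d_\Gamma(\tilde x,\gamma\tilde y)$ then yields $d_{\textsl{H}}(G(x),G(y))\le c\,d(x,y)$ in a few lines, with no iteration, no stabilization, and no equivariance anywhere: the descent is performed at the level of set-valued maps, where taking the convex hull of the fibre image makes well-definedness automatic, and Shvartsman's selection theorem is applied afterwards on $\mathcal M$ itself. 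Two further points you gloss over: the tree finiteness principle requires the hypothesis on subsets of $k^{\sharp}(m)$ points, whereas Theorem 1.2 assumes it only for $N(m,Y)$ points, and bridging that discrepancy is a separate (cited) argument; and the finite-intersection step in your reduction of the pseudometric case needs a Helly-type argument in $\mathcal K_m(Y)$ when a $\rho$-null class has more than $N(m,Y)$ points, not just compactness.
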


The proof goes along the following lines. First the authors prove the result for $(\mathcal M,\rho)$ being the vertex set of a finite metric tree with $N(m,Y)$ replaced by some $k^\sharp=k^\sharp(m)\in\N$ and with $\gamma$ replaced by some $\gamma_0=\gamma_0(m)\in\mathbb R_+$ (both constants depend on $m$ only), see \cite[Cor.\,4.16]{FS}.  Then to pass from finite metric trees to arbitrary metric spaces the authors prove the following result. In its formulation
$d_{\textsl{H}}(A, B)$ stands for the Hausdorff distance between $A, B \in \mathcal K_m(Y)$ and $k^\sharp$ and $\gamma_0$  are the constants from  \cite[Cor.\,4.16]{FS}.
\begin{Theorem}[\cite{FS}, Theorem 5.2]  Let $(\mathcal M, \rho)$ be a metric space, and let $F : \mathcal M \rightarrow \mathcal K_m(Y)$ for a Banach space $Y$. Let $\lambda$ be a positive real number.

\noindent Suppose that for every subset $\mathcal M' \rightarrow\mathcal M$ consisting of at most $k^\sharp$ points, the restriction $F|_{\mathcal M'}$ has a Lipschitz selection $f_{\mathcal M'}$  with Lipschitz seminorm $\|f_{\mathcal M'}\|_{{\rm Lip}(\mathcal M' ,Y )}\le\lambda$. 

\noindent Then there exists a mapping $G : \mathcal M \rightarrow \mathcal K_m(Y)$\footnote{called the core of $F$} satisfying the following conditions: \\
(i). $G(x)\subset F(x)$ for every $x \in\mathcal M$;\\
(ii). For every $x, y \in\mathcal M$ the following inequality
\[
d_{\textsl{H}}(G(x), G(y)) \le\gamma_0\lambda \rho(x,y)
\]
holds.
\end{Theorem}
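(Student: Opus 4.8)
\medskip
\noindent\textbf{Proof proposal.} The plan is to build $G$ not from finite subsets of $\mathcal{M}$ (which carry no tree structure) but from finite metric trees mapped into $\mathcal{M}$, so that Corollary 4.16 applies directly with no loss in the constant. Call a triple $(T,\phi,u)$ \emph{admissible over} $x\in\mathcal{M}$ if $T$ is a finite metric tree with path metric $d_T$, $\phi\colon V(T)\to\mathcal{M}$ satisfies $\rho(\phi(v),\phi(v'))\le d_T(v,v')$ for all $v,v'\in V(T)$, and $\phi(u)=x$; put
\[
A(T,\phi,u)=\bigl\{\,g(u)\ :\ g\ \text{is a selection of}\ (F\circ\phi)|_{V(T)},\ \|g\|_{{\rm Lip}(V(T),d_T)}\le\gamma_0\lambda\,\bigr\},
\]
and let $G(x)$ be the intersection of all the sets $A(T,\phi,u)$ with $(T,\phi,u)$ admissible over $x$. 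Two elementary facts drive the argument. \emph{Transfer:} if $(T,\phi,u)$ is admissible and $V'\subset V(T)$ has at most $k^\sharp$ points, then so does $\phi(V')$, the hypothesis gives a selection $h$ of $F|_{\phi(V')}$ with $\|h\|_{{\rm Lip}}\le\lambda$, and $h\circ\phi|_{V'}$ is a selection of $(F\circ\phi)|_{V'}$ of $d_T$-seminorm $\le\lambda$ (since $\phi$ is $1$-Lipschitz); hence Corollary 4.16 applies to $F\circ\phi$ on $T$ and $A(T,\phi,u)\ne\emptyset$. \emph{Gluing:} given finitely many triples admissible over the same $x$, identify their distinguished vertices into one vertex $u$ to obtain a finite metric tree $T$; the constituent maps agree at $u$ and assemble to a $1$-Lipschitz $\phi\colon V(T)\to\mathcal{M}$ (the triangle inequality in $\mathcal{M}$ covers vertices lying in different pieces), so $(T,\phi,u)$ is admissible over $x$, and restricting selections yields $A(T,\phi,u)\subset\bigcap_i A(T_i,\phi_i,u_i)$.

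With these in hand, $G(x)\in\mathcal{K}_m(Y)$ and (i) follow quickly. Each $A(T,\phi,u)$ is a compact convex subset of $F(x)$: convexity comes from convexity of the sets $F(\phi(v))$ together with the triangle inequality in $Y$, and compactness from the fact that the selections in question form a closed subset of the compact finite product $\prod_{v\in V(T)}F(\phi(v))$ with evaluation at $u$ continuous; being nonempty by Transfer, it lies in $\mathcal{K}_m(Y)$ and in particular has covering dimension $\le m$. By Gluing the family $\{A(T,\phi,u)\}$ over all triples admissible over $x$ has the finite intersection property, so $G(x)$, an intersection of compact subsets of the compact set $F(x)$, is a nonempty compact convex subset of $F(x)$ of dimension $\le m$. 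A one-vertex tree is admissible and produces $A=F(x)$, so $G(x)\subset F(x)$, which is (i).

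For (ii), fix $x,y$ with $\rho(x,y)>0$ (the case $x=y$ being trivial), fix $z\in G(x)$, and set $W=\{w\in F(y):\|z-w\|\le\gamma_0\lambda\rho(x,y)\}$; by Gluing and compactness of $W$ it suffices to meet $W$ against $A(T_1,\phi_1,v_1)\cap\cdots\cap A(T_k,\phi_k,v_k)$ for arbitrary finitely many triples admissible over $y$. Glue the $T_i$ at the $v_i$ into $T'$ with common vertex $v$, $1$-Lipschitz $\phi'$, $\phi'(v)=y$; then graft at $v$ a pendant edge of length $\rho(x,y)$ ending in a new vertex $u^*$ and extend $\phi'$ by $\phi''(u^*)=x$. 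The extension stays $1$-Lipschitz since $d_{T''}(u^*,p)=\rho(x,y)+d_{T'}(v,p)\ge\rho(x,y)+\rho(y,\phi'(p))\ge\rho(x,\phi''(p))$ for all $p\in V(T')$, so $(T'',\phi'',u^*)$ is admissible over $x$ and $z\in A(T'',\phi'',u^*)$: choose a selection $g$ of $(F\circ\phi'')|_{V(T'')}$ with $d_{T''}$-seminorm $\le\gamma_0\lambda$ and $g(u^*)=z$. Then $g(v)\in F(y)$ and $\|z-g(v)\|=\|g(u^*)-g(v)\|\le\gamma_0\lambda\,d_{T''}(u^*,v)=\gamma_0\lambda\rho(x,y)$, so $g(v)\in W$; and restricting $g$ to each subtree $T_i$ gives $g(v)=g(v_i)\in A(T_i,\phi_i,v_i)$. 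Thus $g(v)\in W\cap\bigcap_i A(T_i,\phi_i,v_i)$; exhausting all triples admissible over $y$ and using compactness of $W$ yields $w\in W\cap G(y)$, and symmetry gives $d_{\textsl{H}}(G(x),G(y))\le\gamma_0\lambda\rho(x,y)$.

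The only place where care is needed is organizational: the right indexing set is trees carrying a $1$-Lipschitz map into $\mathcal{M}$ (not finite subsets of $\mathcal{M}$), and the estimate in (ii) has to be engineered by grafting a pendant edge of length exactly $\rho(x,y)$ with far endpoint sent to $x$. Everything else is the transfer lemma, the gluing lemma, and compactness/finite-intersection bookkeeping; in particular, this route needs only Corollary 4.16 and no separate finiteness principle for arbitrary finite metric spaces.
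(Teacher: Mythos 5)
Your proof is correct, but it takes a genuinely different route from the paper. The paper makes one global construction: it embeds $(\mathcal M,\rho)$ isometrically into the complete metric graph on $\mathcal M$, passes to the universal covering, which is a metric tree (with infinite vertex set in general), extends Corollary 4.16 of \cite{FS} to arbitrary metric trees by a Tikhonov compactness argument, produces a single $\gamma_0\lambda$-Lipschitz selection $f$ of the pulled-back map, and defines $G(x)$ as the closed convex hull of $f(p^{-1}(x))$; the estimate (ii) then drops out of the covering-space identity $d(x,y)=\inf_{\gamma\in\Gamma}d_\Gamma(\tilde x,\gamma\tilde y)$. You instead define $G(x)$ variationally, as the intersection over all finite metric trees mapped $1$-Lipschitzly into $\mathcal M$ of the sets of admissible values at the distinguished vertex, and you establish nonemptiness and the Hausdorff estimate by wedging trees at a common vertex, grafting a pendant edge of length $\rho(x,y)$, and running the finite intersection property inside the compact sets $F(x)$. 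Your route needs only the finite-tree form of Corollary 4.16 as literally stated (the compactness that the paper spends on extending that corollary to infinite trees is absorbed into the definition of $G$), and it avoids the covering-space machinery entirely; the paper's route is shorter once that machinery is in place and yields $G(x)$ as the convex hull of the values of one explicit selection. Both arguments produce the same constant $\gamma_0$. The steps you leave implicit -- convexity and compactness of each $A(T,\phi,u)$, monotonicity of covering dimension for subsets of a compact metric space, and the fact that wedging preserves the intrinsic metrics of the pieces so that restrictions of selections remain admissible -- all check out; only minor polish is needed (e.g., every $A(T,\phi,u)$ is already contained in $F(x)$ because $g(u)\in F(\phi(u))$, so the one-vertex tree is not actually needed for (i)).
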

The main result of \cite{FS} for arbitrary metric spaces is then obtained from the previous one by the applications of the Shvartsman selection theorem \cite[Th.\,1.6]{FS} (see also the references there) and another Shvartsman's theorem \cite{S}. Finally, the result for pseudometric spaces follows easily from the one for metric spaces, cf. \cite[Prop.\,6.1]{FS}.\smallskip

In this note we present an alternate proof of the above formulated Theorem 5.2 of \cite{FS} (the passage from the finiteness principle for metric trees to the construction of the core).\smallskip

\noindent {\em Acknowledgement.} I thank Charlie Fefferman and Pavel Shvartsman for useful discussions.

\sect{Proof of Theorem 5.2 of \cite{FS}}
{\bf 2.1.} We assume an acquaintance of the readers with the basic facts of the metric geometry (see, e.g., \cite[Ch.\,3]{BB} for the references).

Let $(X,d)$ be a connected and locally simply connected metric length space. Let $p: X_\Gamma\rightarrow X$ be a connected regular covering of $X$ with the deck transformation group $\Gamma$. Using the lifting property for paths we equip $X_\Gamma$ with the length metric $d_\Gamma$ pulled back from $X$ so that $p:(X_\Gamma,d_\Gamma)\rightarrow (X,d)$ is a local isometry and the deck transformation group $\Gamma$ acts discretely on $X_\Gamma$ by isometries: $\Gamma\times X_\Gamma\ni (\gamma,x)\mapsto \gamma x\in X_\Gamma$.  Moreover, for each pair $x,y\in X$ and $\tilde x\in p^{-1}(x), \tilde y\in p^{-1}(y)$
\begin{equation}\label{equ2.1}
d(x,y)=\inf_{\gamma\in \Gamma}d_\Gamma(\tilde x,\gamma\tilde y).
\end{equation}

Theorem 5.2 of \cite{FS} is a consequence of the following observation.
\begin{Prop}\label{prop2.1}
Let $X'\subset X$ be a subspace and $F : X' \rightarrow \mathcal K_m(Y)$ for a Banach space $Y$.
Suppose the pullback $p^*F:=F\circ p: X'_\Gamma\rightarrow \mathcal K_m(Y)$, $X'_\Gamma:=p^{-1}(X')$, of $F$ has a Lipschitz selection with Lipschitz seminorm $\le c$. Then there exists  $G: X'\rightarrow \mathcal K_m(Y)$ such that\\
(i). $G(x)\subset F(x)$ for every $x \in  X'$;\\
(ii). For every $x, y \in X'$ the following inequality
\[
d_{\textsl{H}}(G(x), G(y)) \le c d(x,y)
\]
holds.
\end{Prop}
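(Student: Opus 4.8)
The plan is to manufacture $G$ directly from the given selection of $p^*F$, one fibre at a time. Fix a Lipschitz selection $\tilde f\colon X'_\Gamma\to Y$ of $p^*F$ with $\|\tilde f\|_{{\rm Lip}(X'_\Gamma,Y)}\le c$ (the seminorm taken with respect to the restriction of $d_\Gamma$ to $X'_\Gamma$), and for each $x\in X'$ set $S_x:=\{\tilde f(\tilde x):\tilde x\in p^{-1}(x)\}$ and $G(x):=\overline{\mathrm{conv}}\,S_x$. Condition (i) is then immediate: $\tilde f(\tilde x)\in (p^*F)(\tilde x)=F(p(\tilde x))=F(x)$ for every $\tilde x\in p^{-1}(x)$, so $S_x\subset F(x)$, whence $G(x)\subset F(x)$ because $F(x)$ is closed and convex. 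To check that $G(x)$ genuinely belongs to $\mathcal K_m(Y)$ I would argue that $S_x$ is bounded (it lies in the bounded set $F(x)$), so $\overline{S_x}$ is a closed subset of the compact set $F(x)$, hence compact; then $G(x)=\overline{\mathrm{conv}}\,\overline{S_x}$ is the closed convex hull of a compact subset of a Banach space, hence compact; and since $G(x)$ lies in the affine hull of $F(x)$, its covering dimension is at most $m$.

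For (ii) the only input beyond elementary bookkeeping is \eqref{equ2.1}. First I would record that for $x,y\in X'$ and any $\tilde x\in p^{-1}(x)$,
\[
\inf_{\tilde y\in p^{-1}(y)}d_\Gamma(\tilde x,\tilde y)=d(x,y),
\]
where ``$\le$'' is \eqref{equ2.1} itself (the orbit $\Gamma\tilde y_0$ lies in $p^{-1}(y)$ for any fixed $\tilde y_0\in p^{-1}(y)$) and ``$\ge$'' follows from \eqref{equ2.1} taken with $\gamma$ the identity, which already gives $d(p\tilde x,p\tilde y)\le d_\Gamma(\tilde x,\tilde y)$ for all $\tilde x,\tilde y$. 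Now fix $x,y\in X'$ and $\varepsilon>0$. For $\tilde x\in p^{-1}(x)$ choose $\tilde y\in p^{-1}(y)$ with $d_\Gamma(\tilde x,\tilde y)<d(x,y)+\varepsilon$; then
\[
\mathrm{dist}\bigl(\tilde f(\tilde x),G(y)\bigr)\le\|\tilde f(\tilde x)-\tilde f(\tilde y)\|\le c\,d_\Gamma(\tilde x,\tilde y)<c\,\bigl(d(x,y)+\varepsilon\bigr).
\]
Letting $\varepsilon\downarrow 0$ and using that $G(y)$ is compact gives $\mathrm{dist}(\tilde f(\tilde x),G(y))\le c\,d(x,y)$ for every $\tilde x\in p^{-1}(x)$, i.e. $S_x\subset K$, where $K:=G(y)+c\,d(x,y)\,B_Y$ and $B_Y$ denotes the closed unit ball of $Y$. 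Being a sum of a compact convex set and a closed convex set, $K$ is closed and convex, so $G(x)=\overline{\mathrm{conv}}\,S_x\subset K$; that is, $\sup_{a\in G(x)}\mathrm{dist}(a,G(y))\le c\,d(x,y)$. Interchanging the roles of $x$ and $y$ yields the symmetric bound, and therefore $d_{\textsl{H}}(G(x),G(y))\le c\,d(x,y)$.

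I do not anticipate a serious obstacle inside the Proposition itself: essentially all the difficulty has been absorbed into \eqref{equ2.1}. The two points that want a moment's care are the verification that $G(x)\in\mathcal K_m(Y)$ — i.e. that the closed convex hull of a compact set in a Banach space is again compact — and the step promoting the containment $S_x\subset K$ to $G(x)\subset K$, which is legitimate precisely because a closed $r$-neighbourhood of a compact convex set is again closed and convex, so the estimate obtained for the fibrewise generating sets $S_x$ passes automatically to their closed convex hulls. The substantive work of the note will lie \emph{downstream} of this Proposition: to deduce Theorem~5.2 of \cite{FS} one has to realize an arbitrary metric space $\mathcal M$ isometrically as a subspace $X'$ of a connected, locally simply connected length space $X$ (for instance the complete graph on $\mathcal M$ with edge $\{a,b\}$ given length $\rho(a,b)$) whose universal covering $p\colon X_\Gamma\to X$ — automatically regular — is a metric tree; then a bound on the Lipschitz seminorm over all $k^\sharp$-point subsets of $\mathcal M$ pulls back, via the local isometry $p$, to the corresponding bound over all $k^\sharp$-point subsets of $X'_\Gamma$, so \cite[Cor.\,4.16]{FS} produces a Lipschitz selection of $p^*F$ with seminorm $\le\gamma_0\lambda$, and the Proposition then delivers the core with constant $c=\gamma_0\lambda$.
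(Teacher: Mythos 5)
Your proof is correct and follows essentially the same route as the paper's: define $G(x)$ as the closed convex hull of the selection's values on the fibre $p^{-1}(x)$, then use \eqref{equ2.1} together with the fact that the Minkowski sum of a compact convex set and a closed ball is closed and convex to promote the estimate from the generating sets to their hulls. The only cosmetic differences are that the paper works with radius $(c+\varepsilon)d(x,y)$ and disposes of $\varepsilon$ at the end, whereas you take the limit pointwise first, and that you spell out the verification (left implicit in the paper) that $G(x)\in\mathcal K_m(Y)$.
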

\begin{proof}
Let $f$ be the Lipschitz selection of $p^*F$ with Lipschitz seminorm
$\le c$. For each $x\in X'$ let $G(x)$ be the closed convex hull of the set $f(p^{-1}(x))\subset F(x)$. Due to \eqref{equ2.1} for each pair $x,y\in X'$ and 
$\tilde x\in p^{-1}(x), \tilde y\in p^{-1}(y)$,  
\[
\inf_{\gamma\in \Gamma}\|f (\gamma\tilde y)-f (\tilde x)\|\le c\inf_{\gamma\in \Gamma}d_\Gamma(\tilde x, \gamma\tilde y)= cd(x,y).
\]
This implies that for every $\varepsilon>0$ the Minkowski sum $G(x)+\bar B_{(c+\varepsilon)d(x, y)}$ contains set  $\{ f(\gamma \tilde y)\}_{\gamma\in \Gamma}= f(p^{-1}(y))$. (Here $\bar B_{r}\subset Y$ stands for the closed ball of radius $r$ centered at $0\in Y$.)
Hence, as the former is a closed convex subset of $Y$ (because $G(x)$ is compact), the sum contains $G(y)$ as well. Similarly,
$G(x)\subset G(y)+\bar B_{(c+\varepsilon)d(x, y)}$. This shows that $d_{\textsl{H}}(G(x),G(y))\le cd(x,y)$.
\end{proof}
 {\bf 2.2. Proof of Theorem 5.2 of \cite{FS}.} 
We apply \cite[Cor.\,4.16]{FS} (the version of the main theorem \cite[Th.\,1.2]{FS} for vertex sets of finite metric trees). The corollary is proved for finite metric trees. A standard compactness argument involving the Tikhonov theorem  (cf. the construction in the proof of Proposition 6.1 of \cite{FS}) extends the corollary to an arbitrary metric tree. In this general form we use it in the subsequent proof.

Let $(\mathcal M,\rho)$ be a metric space. We embed it isometrically into the {\em complete} metric graph $X:=(\mathcal M, E)$ with the vertex set $\mathcal M$ and the set of edges $E$ consisting of the intervals joining distinct points in $\mathcal M$. If $[x,y]\in E$ its length equals $\rho(x,y)$.  Then in a natural way we define the path metric $d$ on $X$. By definition $X$ is a one-dimensional simplicial complex, so its fundamental group $\Gamma:=\pi_1(X)$ is a free group and the universal covering $p:X_\Gamma\rightarrow X$ is a metric tree with the path metric $ d_\Gamma$ induced by the metric pulled back from $X$.  By definition, $\mathcal M_\Gamma=p^{-1}(\mathcal M)$ is the vertex set of $X_\Gamma$. 

If $F:\mathcal M\rightarrow \mathcal K_m(Y)$ is the map satisfying conditions of Theorem 5.2, then its pullback  
$p^*F:  \mathcal M_\Gamma\rightarrow \mathcal K_m(Y)$ satisfies these conditions as well. Indeed, 
if $S\subset \mathcal M_\Gamma$ is a subset consisting of at most $k^{\sharp}$ points and $f_{p(S)}$ is a Lipschitz selection of $F|_{p(S)}$ with Lipschitz seminorm $\le\lambda$, then its pullback $p^*f_{p(S)}$ is a Lipschitz selection of $p^*F|_{S}$ with Lipschitz seminorm $\le\lambda$
(as $d(p(x),p(y))\le d_\Gamma(x,y)$ for all $x,y\in X_\Gamma$). In particular, due to the extension of \cite[Cor.\,4.16]{FS}   there exists a Lipschitz selection of $p^*F$ with Lipschitz seminorm bounded by $\gamma_0\lambda$.  Applying Proposition \ref{prop2.1} in this setting we obtain the required core $G:\mathcal M\rightarrow \mathcal K_m(Y)$ of Theorem 5.2.\hfill $\Box$

\end{document}